\def\a{\alpha}
\def\ovX{\bar{X}}
\def\ovY{\bar{Y}}
\DeclareMathOperator{\alb}{Alb}
\DeclareMathOperator{\aut}{Aut}
\DeclareMathOperator{\CH}{CH}
\DeclareMathOperator{\ch}{ch}
\DeclareMathOperator{\cl}{cl}
\DeclareMathOperator{\Fq}{\mathbb{F}_q}
\DeclareMathOperator{\id}{id}
\DeclareMathOperator{\Pic}{Pic}
\DeclareMathOperator{\td}{td}
\DeclareMathOperator{\Tr}{Tr}
\newcommand{\bF}{\overline{\mathbb{F}}}
\newcommand{\bX}{\bar{X}}
\newcommand{\C}{\mathbb{C}}
\newcommand{\E}{\mathcal{E}}
\newcommand{\F}{\mathbb{F}}
\newcommand{\Ql}{\mathbb{Q}_{\ell}}
\def\et{\text{\'et}}
\def\rat{\mathbb{Q}}
\newif\ifHideFoot
\newcommand{\marg}[1]{\normalsize{{\color{red}\footnote{{\color{blue}#1}}}{\marginpar[{\vskip
    -.3 cm\color{red}\hfill\tiny\thefootnote$\rightarrow$}]{{\vskip -.3 cm
     \color{red}$\leftarrow$\tiny\thefootnote}}}}}
\newcommand{\Yano}[1]{\marg{(Yano) #1}}
\newcommand{\Jeff}[1]{\marg{(Jeff) #1}}
\newcommand{\Charles}[1]{\marg{(Charles) #1}}
\renewcommand{\Yano}[1]{}
\renewcommand{\Jeff}[1]{}
\renewcommand{\Charles}[1]{}
\newtheorem{theorem}{Theorem}[section]
\newtheorem*{thmA}{Theorem A}
\newtheorem*{thmB}{Theorem B}
\newtheorem{lemma}[theorem]{Lemma}
\theoremstyle{remark}
\newtheorem{definition}[theorem]{Definition}
\newtheorem{empt}[theorem]{\!\!}
\newtheorem{teo}{Theorem}[section]
\newtheorem{lem}[teo]{Lemma}
\newtheorem{cor}[teo]{Corollary}
\newtheorem*{teoalpha*}{Theorem}
\newtheorem*{coralpha*}{Corollary}
\newtheorem*{conalpha*}{Conjecture}
\theoremstyle{definition}
\theoremstyle{remark}
\DeclareMathOperator{\derived}{D}
\def\red{{\rm red}}
\def\et{{\rm \acute et}}
\def\inv{^{-1}}
\def\udot{^{\bullet}}
\def\rat{{\mathbb Q}}
\def\calo{{\mathcal O}}
\def\iso{\cong}
\renewcommand{\bar}[1]{{\overline{#1}}}
\DeclareMathOperator{\gal}{Gal}
\DeclareMathOperator{\pic}{Pic}
\DeclareMathOperator{\G}{G}
\global\let\P\undefined
\DeclareMathOperator{\P}{P}
\def\tensor{\otimes}
\newenvironment{alphabetize}{\begin{enumerate}

}{\end{enumerate}}
\let\lra\longrightarrow
\let\xra\xrightarrow
\let\@wraptoccontribs\wraptoccontribs
\title{Derived equivalence, Albanese varieties, \\ and the zeta functions of $3$--dimensional varieties}
\author{Katrina Honigs}
\address{University of Utah, Mathematics Department, Salt Lake City, UT 84112}
\email{honigs@math.utah.edu}
\thanks{The author is partially supported by a Mathematical Sciences Postdoctoral Research Fellowship, Grant No.\ 1606268.}
 \address{Colorado State University, Department of Mathematics,
  Fort Collins, CO 80523,
  USA}
 \email{j.achter@colostate.edu}
 \address{University of Colorado, Department of Mathematics, 
  Boulder, CO 80309, USA }
 \email{casa@math.colorado.edu}
\address{Fakult\"at f\"ur Mathematik, 
Universit\"at Bielefeld, P.O.Box 100 131, D-33 501 Bielefeld, Germany}
\email{c.vial@dpmms.cam.ac.uk}
\date{\today}
\begin{document}
\begin{bibunit}

\begin{abstract}
We show that any 
derived equivalent smooth, projective varieties of dimension $3$
over a finite field $\F_q$ have equal zeta functions.
This result is an application of the extension to smooth, projective varieties over any field of 
Popa and Schnell's proof that 
derived equivalent smooth, projective varieties over $\mathbb{C}$ have isogenous Albanese torsors; this result is proven in an appendix by Achter, Casalaina-Martin, Honigs and Vial.
\end{abstract}

\maketitle

The problem of characterizing the bounded derived category of coherent sheaves of a variety has connections to birational geometry, the minimal model program, mirror symmetry (in particular, the conjecture of Kontsevich \cite{kontsevich}), 
and motivic questions. 

Orlov has conjectured that derived equivalent smooth, projective varieties have isomorphic motives \cite{motives}. This conjecture predicts that smooth, projective
varieties over a finite field 
that are derived equivalent 
have equal zeta functions. 
The prediction holds in the case of curves since derived equivalent
smooth, projective curves over a finite field are isomorphic:
proof in the genus 1 case is given by
Antieau, Krashen and Ward \cite[Example~2.8]{ward}, and proof in all other cases is a consequence of 
Bondal and Orlov \cite[Theorem~2.5]{ample}, which shows that 
derived equivalent varieties with ample or anti-ample canonical bundle must be isomorphic.
In \cite{kh},
it was verified   that derived equivalent smooth, projective varieties 
over a finite field
that are abelian or of dimension 2  have equal zeta functions.

In this paper, we prove the following extension to these results:

\begin{thmA}\hypertarget{dimthree}{}
Let $X,Y/\F_q$ be derived equivalent smooth, projective varieties of dimension  $3$, where
$\F_q$ is a finite field with $q$ elements. Then 
$\zeta(X)=\zeta(Y)$.
\end{thmA}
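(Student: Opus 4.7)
The plan is to reduce the equality $\zeta(X) = \zeta(Y)$ to the equality of the Frobenius characteristic polynomials $P_i(Z, t) := \det\bigl(1 - tF \mid H^i_{\et}(\bar Z, \Ql)\bigr)$ for $i = 0, \ldots, 6$ and for $Z = X, Y$, since the Grothendieck--Lefschetz trace formula gives $\zeta(Z, t) = \prod_{i=0}^{6} P_i(Z, t)^{(-1)^{i+1}}$. The endpoints $i = 0, 6$ match trivially, and Poincar\'e duality identifies $P_{6-i}$ with a Tate-twisted dual of $P_i$; so it suffices to treat $i = 1, 2, 3$.

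For $i = 1$, I would invoke the Albanese result proved in the appendix: the derived equivalence forces $\alb(X)$ and $\alb(Y)$ to be isogenous over $\F_q$. Since $H^1_{\et}(\bar Z, \Ql) \cong H^1_{\et}(\alb(\bar Z), \Ql)$ as Galois representations and isogenous abelian varieties have isomorphic rational $\ell$-adic cohomology, one obtains $P_1(X, t) = P_1(Y, t)$, and Poincar\'e duality then gives $P_5(X, t) = P_5(Y, t)$.

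For $i = 3$, I would use the cohomological correspondence $\Phi^H_{\mathcal{P}}$ induced by a Fourier--Mukai kernel $\mathcal{P} \in D^b(X \times_{\F_q} Y)$ realising the equivalence. Built from the Mukai vector $v(\mathcal{P}) = \ch(\mathcal{P}) \cdot \sqrt{\td_{X \times Y}}$, this correspondence preserves parity of cohomological degree and, once the Tate twists coming from $\ch$ are tracked, yields a Frobenius-equivariant isomorphism between the ``odd Mukai cohomologies'' $\widetilde{H}^{\mathrm{odd}}(Z) := H^1(\bar Z, \Ql) \oplus H^3(\bar Z, \Ql)(1) \oplus H^5(\bar Z, \Ql)(2)$ of $X$ and $Y$. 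Because the $H^1$- and $H^5$-summands are already matched by the previous step, I would cancel them in the Grothendieck group of $\ell$-adic Galois representations to extract $P_3(X, t) = P_3(Y, t)$.

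The hard part will be $i = 2$ (equivalently $i = 4$). The analogous Fourier--Mukai isomorphism on even Mukai cohomology gives, after cancelling the trivial $H^0$ and $H^6$ contributions, only the relation $\bigl[H^2(\bar X, \Ql)(1)\bigr] + \bigl[(H^2(\bar X, \Ql)(1))^{\vee}\bigr] = \bigl[H^2(\bar Y, \Ql)(1)\bigr] + \bigl[(H^2(\bar Y, \Ql)(1))^{\vee}\bigr]$ in the Grothendieck group, since Poincar\'e duality identifies $H^4(\bar Z, \Ql)(2)$ with $(H^2(\bar Z, \Ql)(1))^{\vee}$. Both summands are pure of weight $0$, so weight considerations alone cannot isolate $P_2$, and the relation admits non-trivial solutions where Frobenius eigenvalues are swapped with their inverses. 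Resolving this will require additional derived-invariant structure---for instance, the action of $\Phi^H_{\mathcal{P}}$ on N\'eron--Severi classes, or the preservation of the Brauer-group piece of transcendental $H^2$---and this is the step I expect to demand the most technical care.
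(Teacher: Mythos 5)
Your overall strategy --- the Lefschetz trace formula, the Albanese/Picard isogeny from the appendix for $H^1$ (and hence $H^5$), and cancellation of the $H^1$- and $H^5$-summands in the odd Mukai structure to extract $H^3$ --- is exactly the paper's. But the proposal has a genuine gap: the $H^2/H^4$ step, which you explicitly leave unresolved and expect to require extra derived-invariant input such as the action on N\'eron--Severi classes or the Brauer group. No such input is needed. The paper closes this step with Deligne's Hard Lefschetz theorem (Weil II, via \cite[Lemma~4.2]{kh}): for a smooth projective $3$-fold $V/\F_q$, the multiset of Frobenius eigenvalues on $H^4_{\et}(\bar{V},\Ql)$ is exactly $q$ times the multiset on $H^2_{\et}(\bar{V},\Ql)$, so $\Tr(\varphi^*|H^4)=q\,\Tr(\varphi^*|H^2)$ and the even Mukai trace identity collapses to $\tfrac{2}{q}\Tr(\varphi^*|H^{2}(\bar{X},\Ql))=\tfrac{2}{q}\Tr(\varphi^*|H^{2}(\bar{Y},\Ql))$, which is precisely the missing equality; applied to all powers $\varphi^{n*}$ it yields $P_2$ and then $P_4$.

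Moreover, the obstruction you describe is not actually there, so your own reduction is closer to a proof than you realize. The characteristic polynomial of $\varphi^*$ on $H^2(\bar{Z},\Ql)(1)$ has rational coefficients, and by purity every root $\beta$ satisfies $|\iota(\beta)|=1$ for each embedding $\iota$, whence $\iota(\beta)^{-1}=\overline{\iota(\beta)}$; since the root multiset is stable under conjugation, it is stable under $\beta\mapsto\beta^{-1}$. Therefore $[(H^2(\bar{Z},\Ql)(1))^{\vee}]=[H^2(\bar{Z},\Ql)(1)]$ in the Grothendieck group of Frobenius modules, and the relation you derived already reads $2[H^2(\bar{X},\Ql)(1)]=2[H^2(\bar{Y},\Ql)(1)]$: the ``swap eigenvalues with their inverses'' solutions you worry about coincide with the identity solution. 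So the argument can be completed either by Hard Lefschetz as in the paper or by purity plus rationality as above; as written, however, the proposal does not prove the theorem.
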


The proof of Theorem~A is similar to the argument in \cite{kh} 
proving that derived equivalent smooth, projective surfaces over any finite field have equal zeta functions:  it is accomplished
by  comparing the 
eigenvalues of the geometric Frobenius morphism acting on the
$\ell$-adic \'etale cohomology groups of the varieties in question. 

The crucial ingredient for making this comparison between the point-counts of
 three-dimen{\-}sional varieties is
 the following theorem,
proven in Appendix~\ref{app}, 
 which has as a corollary that
if $X$ and $Y$ are derived equivalent smooth, projective varieties over a finite field $\Fq$ and $\ovX$, $\ovY$ are their base changes to $\bar{\Fq}$, then there is an isomorphism
\[H^1_{\et}(\ovX,\Ql)\cong H^1_{\et}(\ovY,\Ql)\]
that is compatible with the action of 
the $q$-th power geometric Frobenius morphism:

\begin{thmB}\hypertarget{alb}{}
Derived equivalent smooth, projective varieties $X$ and $Y$ over an arbitrary field $k$
have isogenous Albanese varieties.
\end{thmB}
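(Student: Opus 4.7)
My plan is to adapt to an arbitrary field $k$ the argument of Popa and Schnell that established this result over $\mathbb{C}$. The overall strategy is to promote a derived equivalence to an isomorphism of certain group schemes, pass to maximal abelian variety quotients, and extract an isogeny of Albanese varieties by duality with $\Pic^0$.

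\textbf{Step 1 (Rouquier's isomorphism over $k$).} I would first record in our setting that any $k$-linear exact equivalence $\Phi : D^b(X) \xrightarrow{\sim} D^b(Y)$ induces a group scheme isomorphism over $k$
\[
F : \aut^0(X) \times \Pic^0(X) \xrightarrow{\sim} \aut^0(Y) \times \Pic^0(Y),
\]
characterized by the intertwining property $\Phi \circ (\alpha^* \otimes L) \cong (\beta^* \otimes M) \circ \Phi$ whenever $F(\alpha, L) = (\beta, M)$. This was constructed by Rouquier over $\mathbb{C}$ and subsequently extended to broader settings by Ballard. In our setting, some care is required with the possible non-reducedness of $\aut^0$ in positive characteristic and with Galois-equivariance when $k$ is not algebraically closed.

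\textbf{Step 2 (Maximal abelian quotient).} By Chevalley's structure theorem, the reduced identity component $\aut^0(X)_{\operatorname{red}}$ is an extension of an abelian variety $\aut^0(X)^{\operatorname{ab}}$ by a connected affine group, and similarly for $Y$. Since the maximal abelian variety quotient is additive on products, passing to such quotients on each side of the Rouquier isomorphism yields an isomorphism of abelian varieties
\[
\bar F : \aut^0(X)^{\operatorname{ab}} \times \Pic^0(X) \xrightarrow{\sim} \aut^0(Y)^{\operatorname{ab}} \times \Pic^0(Y).
\]

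\textbf{Step 3 (Isogeny of $\Pic^0$s and conclusion).} Consider the composition
\[
\Pic^0(X) \hookrightarrow \aut^0(X)^{\operatorname{ab}} \times \Pic^0(X) \xrightarrow{\bar F} \aut^0(Y)^{\operatorname{ab}} \times \Pic^0(Y) \twoheadrightarrow \Pic^0(Y).
\]
Following Popa and Schnell, this homomorphism of abelian varieties has finite kernel, so is an isogeny onto an abelian subvariety of $\Pic^0(Y)$. The symmetric argument applied with the roles of $X$ and $Y$ exchanged (using $\Phi^{-1}$) produces an isogeny in the reverse direction, and combining the two gives $\Pic^0(X) \sim \Pic^0(Y)$. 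Dualizing via the canonical identification $\alb(X) = \Pic^0(X)^\vee$ yields the desired isogeny $\alb(X) \sim \alb(Y)$.

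\textbf{Main obstacle.} The heart of the argument is the finite-kernel assertion in Step 3. Over $\mathbb{C}$, Popa and Schnell characterize the image of $\Pic^0(X)$ inside the Rouquier group using analytic inputs that are not immediately available in our generality; the principal challenge is to furnish purely algebraic replacements, presumably by examining how the Fourier--Mukai kernel $\mathcal{P}$ transforms under twists by elements of $\Pic^0(X)$ and arguing that a positive-dimensional abelian subvariety in the kernel would force $\mathcal{P}$ to be invariant under a one-parameter family of such twists, which is incompatible with $\Phi$ being an equivalence. Non-reducedness of $\aut^0(X)$ in characteristic $p > 0$ and descent from $\bar k$ to $k$ are secondary technical issues, expected to be handled by systematically passing to reduced structures and exploiting the Galois-equivariance built into the Rouquier construction.
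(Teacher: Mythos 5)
Your Steps 1 and 2 are consistent with the paper's approach (Rouquier's isomorphism descends to $K$, and one works with the reduced group schemes; the paper reduces to perfect $K$ via Chow rigidity, and your ``secondary issues'' are indeed handled as you expect). The genuine gap is Step 3. The claim that the composite $\Pic^0(X)\hookrightarrow \aut^0(X)^{\operatorname{ab}}\times\Pic^0(X)\xrightarrow{\bar F}\aut^0(Y)^{\operatorname{ab}}\times\Pic^0(Y)\twoheadrightarrow\Pic^0(Y)$ has finite kernel is false in general, and it is not something Popa and Schnell prove; the possibility that this kernel is positive-dimensional is exactly the difficulty their argument is built to circumvent (``$F$ need not preserve the given decompositions of the source and target''). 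A concrete counterexample is the classical Fourier--Mukai equivalence $D^b(A)\simeq D^b(\widehat A)$ of an abelian variety and its dual via the Poincar\'e bundle: the Rouquier isomorphism interchanges the translation factor and the $\Pic^0$ factor, so the composite $\Pic^0(A)\to\Pic^0(\widehat A)$ is the zero map. The same example defeats the strategy in your ``Main obstacle'' paragraph: having $p_X^*L\otimes\mathcal E\cong(\psi\times\id)^*\mathcal E$ for a one-parameter family of $L\in\Pic^0(X)$ is perfectly compatible with $\Phi_{\mathcal E}$ being an equivalence, so no contradiction can be extracted there. (The difficulty in Popa--Schnell is not an ``analytic input''; their argument is already essentially algebraic.)

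What the paper actually does at this point is different in structure: it defines abelian subvarieties $A_X=p_{\G(X)}(F^{-1}(\P(Y)))\subseteq\G(X)$ and $A_Y=p_{\G(Y)}(F(\P(X)))\subseteq\G(Y)$, shows that $F$ restricts to an isomorphism $A_X\times\P(X)\cong A_Y\times\P(Y)$, and then reduces by Poincar\'e reducibility to proving $A_X\sim A_Y$. That last step requires a further construction: a homomorphism $\tau:A_X\times\P(X)\to(A_X\times A_Y)\times(\widehat A_X\times\widehat A_Y)$ built from $F$ and from the canonical morphisms $\alb(\G(X))\to\alb^1(X)$ (the Albanese torsor is used precisely to avoid choosing a rational point in the support of $\mathcal E$), whose compositions with the projections $p_{13}$ and $p_{24}$ are isogenies. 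None of this machinery appears in your proposal, and without it there is no route from the Rouquier isomorphism to an isogeny $\Pic^0(X)_{\operatorname{red}}\sim\Pic^0(Y)_{\operatorname{red}}$. The final dualization to pass from $\Pic^0$ to $\alb$ is fine.
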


The proof of Theorem~B
is given by Popa and Schnell's proof 
that if $X,Y/\mathbb{C}$ are derived equivalent varieties,
then $\Pic^0(X)$ and $\Pic^0(Y)$ must be isogenous
\cite{popa}, with a few small changes.

An alternate proof of  \hyperlink{alb}{Theorem~B}
over $\C$ has also been obtained by R.\ Abuaf in his Theorem~3.0.14 of \cite{abuaf}.  It is conceivable that similar methods to those in loc.\ cit.\ can be used over algebraically closed fields of arbitrary characteristic.

%In Section \ref{sec.on}, we provide some background. 
%In Section~\ref{sec.b}, we prove \hyperlink{alb}{Theorem~B}. In Section~\ref{sec.zeta}, we prove \hyperlink{dimthree}{Theorem~A}.

\subsubsection*{Acknowledgements}
Thanks to Aaron Bertram, Christopher Hacon, Daniel Litt, Luigi Lombardi, 
Martin Olsson,
Mihnea Popa, 
Rapha\"{e}l Rouquier, Christian Schnell and Sofia Tirabassi for helpful comments, conversations and correspondence.

%The author is partially supported by a Mathematical Sciences Postdoctoral Research Fellowship, Grant No.\ 1606268.

\section{Background}

We take a \textit{variety} to be a separated, integral scheme of finite type over a field.
In this section, $X$ and $Y$ denote smooth, projective varieties.

\begin{definition}
An exact functor $F$ between derived categories $D^b(X)$ and $D^b(Y)$ is a \textit{Fourier--Mukai transform} if there exists an object $P \in D^b(X\times Y)$, called a \textit{Fourier--Mukai kernel}, such that 
\begin{equation}\label{fmdef}
F\cong {p_Y}_*(p_X^*(-)\otimes P)=:\Phi_P,
\end{equation}
where $p_X$ and $p_Y$ are the projections $X\times Y\to X$ and and $X\times Y\to Y$.
A Fourier--Mukai transform that is an equivalence of categories is called a \textit{Fourier--Mukai equivalence}.
The pushforward, pullback, and tensor 
in \eqref{fmdef}
are all in their derived versions, but the notation
is suppressed. 

A \textit{derived equivalence} is an exact equivalence between derived categories; varieties are said to be derived equivalent if their associated bounded derived categories are. By the following theorem, in the context of this paper, derived equivalence and Fourier--Mukai equivalence are synonymous.
\end{definition}

\begin{theorem}[Orlov {{\cite[Theorem 3.2.1]{Ocoh}}}]\label{orlov.kernel.exist}
Let $X$ and $Y$ be smooth projective varieties and
 $F:D^b(X)\to  D^b(Y)$  an exact equivalence.  
Then there is an object $\mathcal{E}\in D^b(X\times Y)$ such that $F$ is isomorphic to the functor $\Phi_{\mathcal{E}}$, and the object $\mathcal{E}$ is determined uniquely up to isomorphism. 
\end{theorem}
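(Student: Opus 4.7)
The plan is to treat uniqueness and existence separately, with uniqueness being the warm-up. For uniqueness, suppose $\mathcal{E}_1, \mathcal{E}_2 \in D^b(X \times Y)$ both represent $F$. Evaluating at a skyscraper sheaf $k(x)$ for a closed point $x \in X$ gives, by base change,
\[
F(k(x)) \;\cong\; \Phi_{\mathcal{E}_i}(k(x)) \;\cong\; Li_x^* \mathcal{E}_i,
\]
where $i_x\colon \{x\} \times Y \hookrightarrow X \times Y$. Hence $\mathcal{E}_1$ and $\mathcal{E}_2$ have isomorphic fibers over every closed point. Since the skyscrapers form a spanning class of $D^b(X)$, I would upgrade the pointwise isomorphisms to a global one by identifying
\[
\Hom_{D^b(X \times Y)}(\mathcal{E}_1, \mathcal{E}_2) \;\cong\; \Hom(\Phi_{\mathcal{E}_1}, \Phi_{\mathcal{E}_2})
\]
(natural transformations of Fourier--Mukai functors), and then extracting the invertible element corresponding to the given isomorphism $\Phi_{\mathcal{E}_1} \cong \Phi_{\mathcal{E}_2}$.

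For existence, my strategy is to build $\mathcal{E}$ by applying $F$ ``along the second factor'' to a resolution of the diagonal in $X \times X$. Fix a very ample line bundle on $X$ and produce a bounded complex $K^\bullet$ on $X \times X$, quasi-isomorphic to $\Delta_*\mathcal{O}_X$, whose terms are finite direct sums of objects of the form $p_1^* A^i_j \otimes p_2^* B^i_j$ with $A^i_j, B^i_j$ perfect complexes on $X$; a Koszul-type resolution built from the chosen line bundle furnishes such a $K^\bullet$. I would then set
\[
\mathcal{E}^i \;:=\; \bigoplus_j p_1^* A^i_j \otimes p_2^* F(B^i_j)
\]
on $X \times Y$, with differentials inherited from those of $K^\bullet$ via functoriality of $F$. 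Because $\Phi_{\Delta_*\mathcal{O}_X} \cong \id_{D^b(X)}$, a spanning-class argument on skyscrapers then yields $\Phi_{\mathcal{E}}(M) \cong F(M)$ for all $M \in D^b(X)$, promoting the term-by-term construction to a natural isomorphism $F \cong \Phi_{\mathcal{E}}$.

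The hard part is that $F$ is only defined on $D^b(X)$, not on $D^b(X \times X)$, so literally ``applying $F$ to the second factor'' of $K^\bullet$ is not a priori well-defined, and one must check independence of the chosen resolution. I would circumvent this obstacle by invoking a representability theorem in the style of Bondal--Van den Bergh: under the smooth projective hypothesis, $D^b(X \times Y)$ is a saturated triangulated category, and the data of $F$ together with its adjoints determines a cohomological functor on $D^b(X \times Y)^{\mathrm{op}}$ whose representing object is the desired $\mathcal{E}$. This abstract approach sidesteps the functoriality issue in the explicit construction, at the cost of verifying the finiteness hypotheses needed for representability; these rely on the fact that $F$, being an equivalence between derived categories of smooth projective varieties, automatically admits both left and right adjoints and commutes with Serre functors.
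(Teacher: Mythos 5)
The paper does not prove this statement; it is quoted verbatim from Orlov, so the only thing to assess is whether your sketch would actually reconstitute Orlov's (genuinely difficult) theorem. Your starting point for existence --- resolve $\Delta_*\mathcal{O}_X$ by external products and apply $F$ to the second factor --- is indeed the germ of Orlov's argument, but both of the devices you offer to repair its defects fail. For uniqueness, the identification $\Hom_{D^b(X\times Y)}(\mathcal{E}_1,\mathcal{E}_2)\cong\Hom(\Phi_{\mathcal{E}_1},\Phi_{\mathcal{E}_2})$ is not available: the assignment of a functor to a kernel is neither full nor faithful in general (this is exactly the content of the later work of Canonaco--Stellari on non-uniqueness of Fourier--Mukai kernels), and the preliminary observation that $Li_x^*\mathcal{E}_1\cong Li_x^*\mathcal{E}_2$ for all closed points $x$ proves nothing by itself --- two non-isomorphic vector bundles of the same rank already have isomorphic fibers everywhere. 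Producing even one morphism $\mathcal{E}_1\to\mathcal{E}_2$ inducing the given isomorphism of functors is the hard part, and Orlov extracts it from the explicit construction of the kernel, not from a spanning-class formality.

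For existence, the appeal to Bondal--Van den Bergh saturatedness does not circumvent the lifting problem; it merely relocates it. To represent a cohomological functor on $D^b(X\times Y)^{\mathrm{op}}$ you must first \emph{define} one on all objects, whereas $F$ only dictates values on external products $A\boxtimes B$; a cohomological functor is not determined by, nor functorially reconstructible from, its values on a generating class, so ``the data of $F$ together with its adjoints determines a cohomological functor on $D^b(X\times Y)^{\mathrm{op}}$'' is an assertion of precisely the thing that needs proving. (Saturatedness is used in this circle of ideas only to show $F$ has adjoints, which you also need, but that is a different statement.) The actual proof confronts the totalization problem head-on: after twisting by sufficiently high powers of the ample bundle one arranges that the terms of the complex and their images under $F$ are sheaves and that the relevant negative $\Hom$-groups vanish, so the Postnikov system (convolution) of the termwise-transformed complex exists and is unique; one then still has to verify, via the ample sequence $\{\mathcal{O}_X(i)\}$ and fully faithfulness, that the resulting $\Phi_{\mathcal{E}}$ agrees with $F$. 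None of this is optional, and neither of your two shortcuts substitutes for it, so as it stands the proposal has genuine gaps in both halves.
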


The full statement of \cite[Theorem 3.2.1]{Ocoh} is stronger than what is given here, but the statement in 
Theorem~\ref{orlov.kernel.exist} is sufficient for the purposes of this paper.

\begin{empt}
Let $X$ and $Y$ be smooth, projective varieties over a perfect field.
Any Fourier--Mukai transform gives a map on Chow groups:
The functor $\Phi_{\E}$ induces a map
$$\Phi_{\E}^{\CH}=p_{Y*}(v(\E)\cup p_X^*(-)): \CH(X)_{\mathbb{Q}}\to\CH(Y)_{\mathbb{Q}}$$
where $v(\E):= \ch (\E).\sqrt{\td(X\times Y)}$ is the \textit{Mukai vector} of $\E$ (see for instance \cite[Definition 5.28]{huybrechts}). 
Since $\Phi_{\E}$ is an equivalence, $\Phi_{\E}$ is a bijection (cf.\ \cite[Remark 5.25, Proposition 5.33]{huybrechts}).

Similarly, the cycle class of $v(\E)$ inside any Weil cohomology theory $H$ 
(i.e., de~Rham, singular, crystalline or $\ell$-adic \'etale)
induces a map $\Phi_{\E}^H=p_{Y*}(\cl(v(\E))\cup p_X^*(-))$ that factors through the above map on Chow rings with rational coefficients. 
This map on cohomology does not necessarily preserve degree, and,  
in the case of $\ell$-adic \'etale cohomology of varieties over finite fields,
Tate twists must be accounted for the map to be compatible with the of action geometric Frobenius, so care must be taken with the domain and codomain of $\Phi_{\E}^H$.
The map $\Phi_{\E}^H$ gives the following isomorphisms compatible with the action of geometric Frobenius $\varphi$ between the \textit{even} and \textit{odd Mukai--Hodge structures} \cite{kh,thesis}, of $X$ and $Y$, where $d$ denotes $\dim(X)(=\dim(Y))$:
\begin{align}
\bigoplus^{d}_{i=0} H^{2i}({X})(i)&\cong \bigoplus^{d}_{i=0} H^{2i}({Y})(i),
\label{mhon}
\\
\bigoplus^{d}_{i=1} H^{2i-1}({X})(i)&\cong \bigoplus^{d}_{i=1} H^{2i-1}({Y})(i).\label{mhtw}
\end{align}
\end{empt}

\section{Zeta Functions}\label{sec.zeta}

\begin{thmA}
Let $X,Y/\F_q$ be derived equivalent smooth, projective varieties of dimension  $3$, where
$\F_q$ is a finite field with $q$ elements. Then 
$\zeta(X)=\zeta(Y)$.
\end{thmA}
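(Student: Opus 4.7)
My strategy is to reduce the equality $\zeta(X) = \zeta(Y)$ to the statement that, for each $i = 0, 1, \ldots, 6$, the characteristic polynomial $P_i(X, t) := \det(1 - \varphi\, t \mid H^i_{\et}(\bar{X}, \Ql))$ of geometric Frobenius $\varphi$ equals $P_i(Y, t)$. The degrees $i = 0$ and $i = 6$ are immediate from $X$ and $Y$ being connected smooth projective threefolds, yielding $1-t$ and $1-q^3 t$ respectively. For $i = 1$, Theorem B produces an isogeny $\alb(X) \sim \alb(Y)$ over $\F_q$ and hence a Frobenius-equivariant isomorphism $H^1_{\et}(\bar{X}, \Ql) \cong H^1_{\et}(\bar{Y}, \Ql)$, giving $P_1(X, t) = P_1(Y, t)$; Poincar\'e duality promotes this to $P_5(X, t) = P_5(Y, t)$.

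For the degree-$3$ cohomology I would apply the odd Mukai--Hodge isomorphism \eqref{mhtw}, which in dimension $d = 3$ reads
\[
H^1(X)(1) \oplus H^3(X)(2) \oplus H^5(X)(3) \;\cong\; H^1(Y)(1) \oplus H^3(Y)(2) \oplus H^5(Y)(3)
\]
compatibly with Frobenius. Since the $H^1$ and $H^5$ contributions agree on both sides by the previous step, cancelling them forces the Frobenius characteristic polynomials on $H^3(X)(2)$ and $H^3(Y)(2)$, and therefore on $H^3(X)$ and $H^3(Y)$, to coincide.

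The crux is the case $i = 2$ (equivalently $i = 4$), where the even isomorphism \eqref{mhon} gives only the composite identity
\[
P_2(X, t/q) \, P_4(X, t/q^2) \;=\; P_2(Y, t/q) \, P_4(Y, t/q^2)
\]
after cancelling the $H^0$ and $H^6$ factors, and this does not obviously separate $P_2$ from $P_4$. My plan here is to exploit Poincar\'e duality together with the Weil conjectures and the integrality of the Frobenius characteristic polynomial: by duality the eigenvalues of $\varphi$ on $H^4(X)(2)$ are $q/\alpha$ as $\alpha$ ranges over eigenvalues on $H^2(X)$; since $|\alpha| = q$ by the Weil conjectures we have $q/\alpha = \bar{\alpha}/q$; and since $P_2(X, t) \in \mathbb{Z}[t]$ its complex roots form a multiset invariant under complex conjugation, so $\{q/\alpha\} = \{\alpha/q\}$ as multisets. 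Therefore $P_4(X, t/q^2) = P_2(X, t/q)$, and the even isomorphism collapses to $P_2(X, t/q)^2 = P_2(Y, t/q)^2$; taking the constant-term-$1$ square root in $\Ql[t]$ yields $P_2(X, t) = P_2(Y, t)$, and Poincar\'e duality delivers $P_4(X, t) = P_4(Y, t)$. Assembling the seven matching polynomials then recovers $\zeta(X) = \zeta(Y)$.
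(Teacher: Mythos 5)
Your proposal is correct, and its overall skeleton matches the paper's: trivial agreement in degrees $0$ and $6$, Theorem~B for degree $1$, the odd Mukai--Hodge isomorphism \eqref{mhtw} to extract degree $3$ after the degree $1$ and $5$ contributions are known, and the even isomorphism \eqref{mhon} to handle degrees $2$ and $4$. The genuine difference is the tool you use to disentangle $H^{2d-i}$ from $H^i$. The paper invokes Deligne's Hard Lefschetz theorem (via \cite[Lemma~4.2]{kh}), which says outright that the Frobenius eigenvalues on $H^{2d-i}$ are $q^{d-i}$ times those on $H^i$; this turns the even trace identity into $\tfrac{2}{q}\Tr(\varphi^*|H^2(\bar X,\Ql)) = \tfrac{2}{q}\Tr(\varphi^*|H^2(\bar Y,\Ql))$, where dividing by $2$ is immediate. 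You instead recover the same multiset identity from Poincar\'e duality plus the Riemann hypothesis over finite fields plus the rationality/integrality of $P_i$: duality gives eigenvalues $q^d/\alpha$ on $H^{2d-i}$, the weight bound $|\alpha|=q^{i/2}$ converts $q^d/\alpha$ to $q^{d-i}\bar\alpha$, and conjugation-stability of the roots of $P_i\in\mathbb{Z}[t]$ identifies $\{q^{d-i}\bar\alpha\}$ with $\{q^{d-i}\alpha\}$. Both inputs are deep (Weil~II versus Weil~I), so neither route is more elementary, but yours avoids Hard Lefschetz at the cost of needing the polynomial square root $P_2(X,t/q)^2=P_2(Y,t/q)^2\Rightarrow P_2(X,t/q)=P_2(Y,t/q)$, which you correctly resolve by the constant-term normalization. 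Working with characteristic polynomials rather than traces of all powers $\varphi^{n*}$ is an equivalent bookkeeping choice that also absorbs the paper's final "replace $\varphi^*$ by $\varphi^{n*}$" step.
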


\begin{proof}
Let $\ovX$, $\ovY$ be the base changes of $X$ and $Y$ to the algebraic closure 
$\bF_q$ of $\F_q$.
Fix $\ell\in\mathbb{Z}^+$ prime such that $(q,\ell)=1$. 

By the Lefschetz fixed-point formula for Weil cohomologies (see Proposition~1.3.6 and Section~4 of 
Kleiman \cite{kleiman}), to prove this theorem it is sufficient to show that 
for any $n\in \mathbb{N}$,
the traces of the geometric $q^n$-th power Frobenius map $\varphi^n$ acting on $H^i(\bX,\Ql)$ and $H^i(\bar{Y},\Ql)$ are the same for each $0\leq i \leq 6$. %This condition is necessary as well: by the ``Riemann hypothesis'' portion of the Weil Conjectures, the characteristic polynomials of Frobenius acting on the $i^{\rm th}$ cohomology groups of smooth, projective varieties with equal zeta functions are equal.

Let $\varphi$ be the ($q$-th power) geometric Frobenius morphism.
By Theorem~\ref{orlov.kernel.exist},
the derived equivalence $D^b(X)\cong D^b(Y)$ is isomorphic to a Fourier--Mukai functor $\Phi_{\E }:=p_{Y*}(p_X^*(-)\otimes \E )$ for some $\E\in D^b(X\times Y)$.
Taking the traces of the action of $\varphi^*$ on
the equations \eqref{mhon},\eqref{mhtw}, 
and using the fact that the presence of a Tate twist $(j)$ has the  effect of 
multiplying the eigenvalues of the action of $\varphi^*$ on cohomology by $\frac{1}{q^{j}}$,
we have:

\begin{align}
\sum^{3}_{i=0} \frac{1}{q^{i}}\Tr(\varphi^*|H^{2i}(\bar{X},\Ql))&= \sum^{3}_{i=0} \frac{1}{q^{i}}\Tr(\varphi^*|H^{2i}(\bar{Y},\Ql)),
\label{mhtron}
\\
\sum^{3}_{i=1} \frac{1}{q^{i}}\Tr(\varphi^*|H^{2i-1}(\bar{X},\Ql))&= \sum^{3}_{i=1} \frac{1}{q^{i}}\Tr(\varphi^*|H^{2i-1}(\bar{Y},\Ql)).\label{mhtrtw}
\end{align}
The values $\Tr(\varphi^*|H^i(\bar{X},\Ql))$ and $\Tr(\varphi^*|H^i(\bar{Y},\Ql))$ are trivially equal for $i=0,6$, so~\eqref{mhtron} reduces to 
\begin{gather}\notag
\tfrac{1}{q}\Tr(\varphi^*|H^{2}(\bar{X},\Ql))
+
\tfrac{1}{q^{2}}\Tr(\varphi^*|H^{4}(\bar{X},\Ql))
\hspace*{3cm}
\\
\hspace*{3cm}
{}= \tfrac{1}{q}\Tr(\varphi^*|H^{2}(\bar{Y},\Ql))
+ \tfrac{1}{q^{2}}\Tr(\varphi^*|H^{4}(\bar{Y},\Ql)).
\label{mhtronred}
\end{gather}

By Deligne's Hard Lefschetz Theorem for $\ell$-adic \'etale cohomology \cite[Th\'eor\`eme 4.1.1]{weilii}, we have the following lemma:

\begin{lemma}[{{\cite[Lemma~4.2]{kh}}}]\label{ev}
Let $V/\F_q$ be smooth, projective variety.
If the set of eigenvalues (with multiplicity) of $\varphi^*$ acting on $H^i_{\et}(\bar{V},\Ql)$, $0\leq i<3$, are $\{\a_1,\ldots,\a_n\}$, then the set of eigenvalues of $\varphi^*$ acting on $H^{2d-i}_{\et}(\bar{V},\Ql)$ are $\{ q^{d-i}{\a_1},\ldots, q^{d-i}{\a_n} \}$.
\end{lemma}

By Lemma~\ref{ev}, \eqref{mhtronred} implies that 
$\Tr(\varphi^*|H^{2}_{\et}(\bX ,\Ql))=\Tr(\varphi^*|H^{2}_{\et}(\bar{Y},\Ql))$ and
$\Tr(\varphi^*|H^{4}_{\et}(\bX ,\Ql))=\Tr(\varphi^*|H^{4}_{\et}(\bar{Y},\Ql))$.

By Lemma~\ref{ev}, \eqref{mhtrtw} implies that 
\begin{gather}
\tfrac{2}{q}\Tr(\varphi^*|H^{1}_{\et}(\bX ,\Ql))
+
\tfrac{1}{q^2}\Tr(\varphi^*|H^{3}_{\et}(\bX ,\Ql))
\hspace*{3cm}\notag
\\
\hspace*{3cm}{}=
\tfrac{2}{q}\Tr(\varphi^*|H^{1}_{\et}(\bar{Y},\Ql))
+
\tfrac{1}{q^2}\Tr(\varphi^*|H^{3}_{\et}(\bar{Y},\Ql)).\label{last}
\end{gather}
By Corollary~\ref{claim}, 
$$\Tr(\varphi^{*}|H^{1}_{\et}(\bX ,\Ql))= \Tr(\varphi^{*}|H^{1}_{\et}(\bar{Y},\Ql)).$$ 
So, by Lemma~\ref{ev}, we have
$\Tr(\varphi^{*}|H^{5}_{\et}(\bX ,\Ql))= \Tr(\varphi^{*}|H^{5}_{\et}(\bar{Y},\Ql))$.

Since \eqref{mhon} and \eqref{mhtw} are compatible with the action of $\varphi^*$, they are compatible with the action of $\varphi^{n*}$,
and hence the above statements comparing the traces of the action of 
$\varphi^*$ also hold true if $\varphi^*$ is replaced by $\varphi^{n*}$. 
In particular,
by \eqref{last}, we have
\[
\hbox{$\Tr(\varphi^{n*}|H^{3}_{\et}(\bX ,\Ql))= \Tr(\varphi^{n*}|H^{3}_{\et}(\bar{Y},\Ql))$,}
\]
and now we have demonstrated that 
$\Tr(\varphi^{n*}|H^{i}_{\et}(\bX ,\Ql))= \Tr(\varphi^{n*}|H^{i}_{\et}(\bar{Y},\Ql))$ for all $0\leq i\leq 6$, as required.
\end{proof}

\putbib[biblio]
\end{bibunit}

\begin{bibunit}
\appendix

\section{Derived equivalent varieties have isogenous Picard varieties}
\label{app}
{\sc\small Jeffrey D. Achter, Sebastian Casalaina-Martin, Katrina Honigs and Charles Vial}

\renewcommand{\thefootnote}{}
\footnote{JDA was partially supported by  grants from the
  the NSA (H98230-14-1-0161, 
  H98230-15-1-0247 and H98230-16-1-0046).  SC-M was
  was partially supported by  a Simons Foundation
  Collaboration Grant for Mathematicians
  (317572) and NSA grant H98230-16-1-0053.  CV was supported by 
  EPSRC Early Career Fellowship
  EP/K005545/1.}

Although Popa and Schnell only claim \cite[Theorem A]{popaschnell}
that derived equivalent complex varieties have isogenous Picard
varieties, their result (and its proof) is valid, with minimal
changes, over an arbitrary field.  Our goal in this appendix is to explain:

\begin{teo}
\label{T:PSK}
Let $X$ and $Y$ be smooth projective varieties over a field $K$.  If
$X$ and $Y$ are derived equivalent, then $\pic^0(X)_\red$ and $\pic^0(Y)_\red$
are isogenous (over $K$).
\end{teo}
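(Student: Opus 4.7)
The plan is to follow Popa and Schnell's argument \cite{popaschnell} essentially verbatim, observing that it is fundamentally algebraic and transports to an arbitrary base field $K$ with only minor modifications. The proof breaks into three steps.

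First, Orlov's Theorem~\ref{orlov.kernel.exist} provides a Fourier--Mukai kernel $\mathcal{E} \in D^b(X \times_K Y)$ realizing the equivalence $\Phi\colon D^b(X) \xrightarrow{\sim} D^b(Y)$. Conjugation by $\Phi$ yields a natural isomorphism of the group-valued functors of $K$-linear autoequivalences of $D^b(X)$ and $D^b(Y)$, and hence of their identity components as $K$-group schemes.

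Second, the pivotal ingredient is Rouquier's theorem identifying the identity component of the autoequivalence group scheme. The natural homomorphism
\[
\aut^0(X) \times \pic^0(X) \;\longrightarrow\; \sheaf{Aut}^0(D^b(X)),
\qquad (g, \mathcal{L}) \longmapsto
\bigl(\mathcal{F} \mapsto g_*\mathcal{F} \otimes \mathcal{L}\bigr),
\]
is an isomorphism. Rouquier proved this over $\mathbb{C}$, but his proof uses only representability of the Picard functor and standard tangent-space computations, and so carries over to arbitrary $K$ (with the caveat that in positive characteristic $\pic^0$ may fail to be reduced, which is the reason for $\pic^0_\red$ in the theorem statement). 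Combined with the first step, this yields an isomorphism of $K$-group schemes
\[
\bigl(\aut^0(X) \times \pic^0(X)\bigr)_\red
\;\cong\;
\bigl(\aut^0(Y) \times \pic^0(Y)\bigr)_\red.
\]

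Third, to extract isogeny of the $\pic^0_\red$ factors alone, pass to the maximal abelian variety quotient (Chevalley's structure theorem), giving an isogeny $\alb(\aut^0(X)_\red) \times \pic^0(X)_\red \sim \alb(\aut^0(Y)_\red) \times \pic^0(Y)_\red$. The orbit map $\aut^0(X)_\red \to X$ through a chosen base point factors through $\alb(X) \cong (\pic^0(X)_\red)^\vee$, realizing the abelian part of $\aut^0(X)_\red$ as an abelian subvariety of the Albanese, and likewise for $Y$. Invoking Poincar\'e's complete reducibility, one separates off these canonical subfactors and concludes that $\pic^0(X)_\red$ is isogenous to $\pic^0(Y)_\red$.

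The main obstacle is the disentangling in the third step: the isomorphism coming from Rouquier's theorem need not respect the product decomposition $\aut^0 \times \pic^0$, so one must argue carefully that any mixing is compatible with the geometric decomposition arising from the action on $X$ and $Y$. This is exactly where Popa and Schnell work hardest in the complex case, but their tools---Poincar\'e reducibility, the Albanese morphism for smooth projective varieties, and Chevalley's theorem---are all available over an arbitrary field, so the adaptation from $\mathbb{C}$ to $K$ is largely mechanical, with the one caveat concerning non-reducedness of $\pic^0$ in positive characteristic already noted above.
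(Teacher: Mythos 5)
Your overall strategy matches the paper's, but you have skated over the one point where the adaptation to a general field $K$ is \emph{not} mechanical, and it sits exactly in your third step. You write that ``the orbit map $\aut^0(X)_\red \to X$ through a chosen base point factors through $\alb(X)$,'' and you then defer the disentangling to Popa--Schnell's argument; that argument chooses a point $(P,Q)$ in the support of the kernel $\E$ and uses the pointed Albanese $\alb(X,P)$ to build the homomorphism whose two projections are the desired isogenies. Over a general field $X(K)$ may be empty, so neither your orbit map nor Popa--Schnell's $(P,Q)$ is available over $K$; carrying these constructions out over $\bar K$ only yields an isogeny over $\bar K$, which does not give the claimed isogeny over $K$ (Chow rigidity lets you descend from the \emph{perfect closure}, not from $\bar K$ --- abelian varieties can be isogenous over $\bar K$ without being isogenous over $K$). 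The paper's substitute is the Albanese \emph{torsor}: $\alb(\G(X))$ acts canonically on $\alb^1(X)$, and since $\aut(\alb^1(X))^0 \iso \alb(X)$ (Lemma \ref{L:auttorsor}), one obtains a canonical $K$-morphism $g_X\colon \alb(\G(X)) \to \alb(X)$ with finite kernel, replacing the base-point-dependent $f_{(X,P)}$ (Lemma \ref{L:albacts}). This is the content your proposal is missing.

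Two smaller gaps in the same vein: (i) you assert Rouquier's theorem directly over $K$, whereas the paper applies it over $\bar K$ and then descends the resulting isomorphism $\bar F$ to $K$ by observing that its graph is $\aut(\bar K/K)$-stable because $\E$ is defined over $K$ --- some such descent is required, since the end product must be a $K$-isogeny; and (ii) over an imperfect field the reduced subscheme $\aut^0(X)_\red$ need not be a group scheme, so one must first reduce to perfect $K$ (the paper does this at the outset, again via Chow rigidity) before taking reduced subschemes. Your caveat about non-reducedness of $\pic^0$ in positive characteristic is correct but is not the issue; for the Picard scheme $\pic^0(X)_\red$ is always a group scheme, while for the automorphism scheme perfectness of the base is genuinely used.
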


In outline, the proof of Theorem \ref{T:PSK} in \cite{popaschnell} for
varieties over an algebraically closed field $k$ proceeds as follows.
A theorem of Rouquier
implies that there is an isomorphism of
group schemes
\begin{equation}
\label{eqrouquierk}
F: (\aut^0_{X/k})_\red \times (\pic^0_{X/k})_\red \xra{\,\sim\,} (\aut^0_{Y/k})_\red 
\times (\pic^0_{Y/k})_\red.
\end{equation}
Unfortunately, $F$ need not preserve the given decompositions of the
source and target schemes.  Using $F$, Popa and Schnell identify
distinguished subgroups (actually, abelian varieties) $A_X \subseteq
(\aut^0_{X/k})_\red$ and $A_Y \subseteq (\aut^0_{Y/k})_\red$, 
and show
that $F$ induces an isomorphism 
\[
A_X\times (\pic^0_{X/k})_{\red}
\xra{\,\sim\,}
A_Y\times
(\pic^0_{Y/k})_{\red}.
\]
By Poincar\'e reducibility, it now suffices to show that $A_X$ and
$A_Y$ are isogenous.  For this, they construct a homomorphism 
\[
\pi:A_X\times (\pic^0_{X/k})_{\red}
\xra{\phantom{\,\sim\,}}
 A_X\times A_Y
\times\widehat A_X\times\widehat A_Y
\]
and show that  $\operatorname{Im}(\pi)$ is isogenous via the
projections $p_{13}$ and $p_{24}$ to both $A_X\times_k\widehat A_X$
and $A_Y\times_k\widehat A_Y$. 

If we now consider varieties over an arbitrary (perfect) field $K$,
since the formation of automorphism and Picard schemes commutes with
base extension, it makes sense to descend $F$ (and subsequent
constructions) from $\bar K$ to $K$.  This goes through without
incident, except that the construction of $\pi$ detailed in \cite{popaschnell}
involves a choice of point in the support of the kernel of the Mukai
transform.  We circumvent this appeal to the existence of rational
points on $X$ and $Y$ by invoking the Albanese torsor.

\subsection{Preliminaries}

For an arbitrary group scheme $G$ over a field, the maximal reduced
subscheme $G_\red$ need not be a group scheme; but this does not
happen for the Picard scheme [FGA VI.2].  For a smooth projective
variety $X/K$, let $\P(X) = \pic^0(X)_\red$ and
$\G(X) = \aut^0(X)_\red$.  Then $\P(X)$ is an abelian variety; and we
will only work with $\G(X)$ when the base field is perfect, in which
case $\G(X)$ is an irreducible group scheme.

Let $X/K$ be a geometrically reduced variety over $K$.
Then $X$
admits an (abelian) Albanese variety $\alb(X)/K$, a torsor
$\alb^1(X)$, and a morphism $X \to \alb^1(X)$ which is universal for
morphisms from $X$ into torsors under abelian varieties (see, e.g., \cite[\S
2]{wittenbergalbanese}).

If $P \in X(K)$ is a point, then there is a pointed morphism $(X,P)
\to (\alb(X), \calo)$ which is universal for pointed morphisms from
$X$ to abelian varieties.  We will sometimes denote $\alb(X)$,
together with this morphism, as  $\alb(X,P)$. 
% If $P$ and $Q$ are
%two different points of $X$, then there is a canonical isomorphism of
%abelian varieties $\alb(X,P) \to \alb(X,Q)$.

Let $\G(X) = \aut^0(X)_\red$.  If $P \in X(K)$ is a base point, Popa
and Schnell compute \cite[Lemma 2.2]{popaschnell} a canonical morphism
\[
\xymatrix{
\alb(\G(X)) \ar[r]^{f_{(X,P)}} & \alb(X,P).
}
\]
\goodbreak

\begin{lem}
\label{L:albacts}
Let $X/K$ be a variety over a perfect field.
\begin{alphabetize}
\item There is a canonical action $\alb(\G(X)) \times \alb^1(X) \to \alb^1(X)$
  which induces a canonical morphism 
\[
\xymatrix{
\alb(\G(X)) \ar[r]^{g_X} &\alb(X).
}
\]
\item If $P \in X(K)$ is a point, then the trivialization
  $\alb(X,P) \stackrel{\sim}{\to} \alb^1(X)$ makes the following diagram commute:
\[
\xymatrix{
\alb(\G(X)) \ar[d]^= \ar[r]^{f_{(X,P)}} & \alb(X,P) \ar[d]^\sim\\
\alb(\G(X)) \ar[r]_{g_X} & \alb^1(X).
}
\]
\end{alphabetize}
\end{lem}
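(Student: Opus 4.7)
The plan is to construct $\mu$ by descending the natural action $a\colon \G(X) \times X \to X$ through the universal property of the Albanese torsor, and then to recognize that the resulting $\mu$ acts by translations. Composing $a$ with the Albanese morphism $X \to \alb^1(X)$ gives a morphism $\G(X) \times X \to \alb^1(X)$. Because $\G(X)$ carries the canonical rational point $e$, the Albanese torsor of the product splits canonically as $\alb^1(\G(X) \times X) \cong \alb(\G(X)) \times \alb^1(X)$, so the universal property produces a unique morphism
\[
\mu\colon \alb(\G(X)) \times \alb^1(X) \longrightarrow \alb^1(X)
\]
extending the composition. Uniqueness in the universal property also propagates the neutrality and associativity axioms from $a$ to $\mu$, so $\mu$ is a group action.

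To see that this action is by translations and extract $g_X$, consider the morphism
\[
\delta\colon \alb(\G(X)) \times \alb^1(X) \to \alb(X), \qquad (A, p) \mapsto \mu(A, p) - p,
\]
where the subtraction uses the torsor structure on $\alb^1(X)$. After base change to an algebraic closure and a trivialization $\alb^1(X) \cong \alb(X)$, $\mu(A, -)$ takes the form $b \mapsto h_A(b) + c_A$ with $h_A \in \End(\alb(X))$ and $c_A \in \alb(X)$. The assignment $A \mapsto h_A$ is a morphism from the connected variety $\alb(\G(X))$ to the discrete scheme of endomorphisms of $\alb(X)$; since $\mu(e, -) = \id$ forces $h_e = \id$, rigidity yields $h_A = \id$ for every $A$. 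Thus $\mu(A, -)$ is translation by an element $g_X(A) := c_A$, and since $\mu$ is a group action, the resulting map $g_X\colon \alb(\G(X)) \to \alb(X)$ is a homomorphism, defined over $K$ because $\mu$ is.

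For part (b), once $P \in X(K)$ trivializes $\alb^1(X) \cong \alb(X, P)$ with the image of $P$ going to $0$, we have $g_X(A) = \mu(A, 0)$, which by construction of $\mu$ equals the image of $g \cdot P$ in $\alb(X, P)$ for any lift $g \in \G(X)$ of $A$. This coincides with the construction of $f_{(X, P)}$ in \cite[Lemma 2.2]{popaschnell} via the pointed morphism $g \mapsto g \cdot P$ composed with the Albanese and factored through $\alb(\G(X))$, so uniqueness of the Albanese factorization forces $g_X = f_{(X, P)}$ under the trivialization. The main obstacle I anticipate is justifying the product identification $\alb^1(\G(X) \times X) \cong \alb(\G(X)) \times \alb^1(X)$ when $X$ has no rational point and $\alb^1(X)$ is a nontrivial torsor; this should follow formally from the rational point on $\G(X)$ and the universal property of Albanese torsors, but deserves careful bookkeeping about torsor structures.
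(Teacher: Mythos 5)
Your argument is correct and follows essentially the same route as the paper: both obtain the action $\alb(\G(X))\times\alb^1(X)\to\alb^1(X)$ from the universal property of the Albanese torsor together with the canonical splitting $\alb^1(\G(X)\times X)\cong\alb(\G(X))\times\alb^1(X)$ furnished by the identity point of $\G(X)$, and then identify the action as one by translations to extract $g_X$. The only cosmetic difference is that where the paper invokes its auxiliary lemma $\aut(T)^0\cong A$ for a torsor $T$ under an abelian variety $A$, you inline the underlying rigidity argument showing $h_A=\id$; part (b) is handled identically via uniqueness in the universal properties.
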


\begin{proof}
By the universal property of the Albanese, the composition $\G(X)
\times X \to X \to \alb^1(X)$ factors through $\alb^1(\G(X)\times X)
\iso \alb^1(\G(X)) \times \alb^1(X)$.  Since $\G(X)$ admits a
$K$-rational point, its Albanese torsor coincides with its Albanese
variety, and we obtain an action
\[
\xymatrix{
\alb(\G(X)) \times \alb^1(X) \ar[r] & \alb^1(X).
}
\]
In particular, $\alb(\G(X))$ acts as a connected group scheme of
automorphisms of $\alb^1(X)$; by Lemma \ref{L:auttorsor}, we obtain a
morphism $g_X:\alb(\G(X)) \to  \alb(X)$.  

Popa and Schnell construct $f_{(X,P)}$ in a similar way, except that
they work with the Albanese varieties of the pointed varieties
$(\G(X),\id)$ and $(X,P)$.  Part (b) then follows from the
universality of $X \to \alb^1(X)$ into 
abelian torsors and of $X \to \alb(X,P)$ into abelian varieties.
\end{proof}

\begin{lem}
\label{L:auttorsor}
Let $A/K$ be an abelian variety, and let $T/K$ be a torsor under $A$.
Then $\aut(T)^0 \iso A$.
\end{lem}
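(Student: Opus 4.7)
The plan is to exhibit a canonical morphism $\alpha : A \to \aut(T)^0$ coming from the torsor action, and then to verify that $\alpha$ is an isomorphism by base change to $\bar K$.

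First, I would use the simply transitive action $\mu : A \times_K T \to T$ to define a morphism of group $K$-schemes $\alpha : A \to \aut_{T/K}$, sending functorially $a \in A(S)$ to the $T_S$-automorphism $\mu(a, -)$. Here $\aut_{T/K}$ is the automorphism group scheme of $T$, which is representable and locally of finite type since $T$ is smooth and projective. Because $A$ is geometrically connected and $\alpha(0_A) = \id_T$, the morphism $\alpha$ factors through the identity component $\aut(T)^0$, and it remains to show that this factored morphism is an isomorphism.

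To check this, I would base change to the algebraic closure $\bar K$, which is permissible because the formation of the automorphism scheme commutes with base change and isomorphisms of $K$-schemes can be verified faithfully flatly. Over $\bar K$, the torsor $T_{\bar K}$ acquires a rational point $t_0$, since every torsor under an abelian variety is trivial over a separably closed field. The choice of $t_0$ yields an isomorphism $\psi : A_{\bar K} \xrightarrow{\sim} T_{\bar K}$, $a \mapsto \mu(a, t_0)$, which is equivariant for the translation action of $A_{\bar K}$ on both sides. Under $\psi$ the map $\alpha_{\bar K}$ is identified with the translation map $A_{\bar K} \to \aut(A_{\bar K})^0$, and this is a classical isomorphism: the full automorphism scheme of an abelian variety $A_0$ over an algebraically closed field splits as $A_0 \rtimes \aut(A_0, 0_{A_0})$, and $\aut(A_0, 0_{A_0})$ is an \'etale (in particular $0$-dimensional) group scheme because it sits inside the unit group of the discrete ring $\End(A_0)$.

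The argument is essentially formal once one grants the representability of $\aut_{T/K}$ and compatibility of its formation with base change. I expect the only real obstacle to be foundational: ensuring that $\aut(T)^0$ is a well-behaved group scheme of finite type over $K$, for which I would appeal to the standard results of Grothendieck and Matsusaka--Mumford on automorphism schemes of smooth projective varieties. Granted this, the lemma reduces by descent to the classical structure theorem for the automorphism scheme of an abelian variety.
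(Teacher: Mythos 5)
Your proposal is correct and follows essentially the same route as the paper's proof: both define the map $A\to\aut(T)^0$ via the torsor action and verify it is an isomorphism after base change to $\bar K$, where the trivialization $T_{\bar K}\iso A_{\bar K}$ reduces everything to the classical fact $\aut(A_{\bar K})^0\iso A_{\bar K}$. You simply spell out the foundational points (representability, connectedness, the \'etaleness of the origin-fixing automorphisms) that the paper leaves implicit.
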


\begin{proof}
Over an algebraic closure, we have $\aut(T)_{\bar
K}^0 \iso \aut(A)_{\bar K}^0 \iso A_{\bar K}$.  Therefore, the
inclusion $A\hookrightarrow \aut(T)^0$ induced by the faithful action
of $A$ on $T$ is an isomorphism.
\end{proof}

\subsection{Proof of the Popa--Schnell theorem}

Having dispatched these preliminaries, we now explain how to adapt the
proof of the complex version of Theorem \ref{T:PSK} in
\cite{popaschnell} to account for an arbitrary base field.  At each stage,
we will see that the morphisms used in \cite{popaschnell}, a priori
defined over an algebraically closed field, actually descend to a
field of definition.

Let $\bar K$ be an algebraic closure of $K$; for a variety $Z/K$, let
$\bar Z = Z_{\bar K}$.

\begin{proof}[Proof of Theorem \ref{T:PSK}]
By Chow rigidity \cite[Thm.\ 3.19]{conradtrace}, two abelian varieties
over $K$ are isogenous if and only if they are isogenous over the
perfect closure of $K$.  Consequently, to prove the theorem we may and
do assume 
that $K$ is perfect.  Note then that if $G$ and $H$ are
group schemes over $K$, then $G_\red$ is again a group scheme, and
$(G\times H)_\red \iso G_\red \times H_\red$.  
Moreover, we have
$\G(\bar Z) \iso \G(Z)_{\bar K}$ and $\P(\bar Z) = \P(Z)_{\bar K}$.

Let $\Phi:\derived^b(X) \to \derived^b(Y)$  be an equivalence of
categories.  A fundamental theorem of Orlov
(Theorem \ref{orlov.kernel.exist})
 asserts that there is an object $\mathcal E
\in \derived^b(X\times Y)$   such that $\Phi$ is given by 
\[
\xymatrix{
\Phi = \Phi_{\mathcal E}: \mathcal M\udot \ar@{|->}[r] &  p_{Y*}(p_X^*
\mathcal M\udot
\tensor \mathcal E)}.
\]
Over $\bar K$, a theorem of Rouquier \cite[Thm. 4.18]{rouquier11} shows that $\Phi$ induces an
isomorphism
\begin{equation*}
%\label{E:rouquier}
\xymatrix{
\aut^0(\bar X) \times \pic^0(\bar X) \ar[r] & \aut^0(\bar Y)
\times \pic^0(\bar Y).
}
\end{equation*}
This induces an isomorphism on reduced subschemes which we
denote $\bar F$:
\begin{equation}
\label{E:rouquierred}
\xymatrix{
\G(X)_{\bar K} \times \P(X)_{\bar K}\ar[r]^{\bar F} & \G(Y)_{\bar K} \times
\P(Y)_{\bar K}
}
\end{equation}
(Note that $\G(X)(\bar K) = \aut^0(X)(\bar K)$, etc.)
On points, $\bar F$ is characterized by the fact that
\[
\bar F(\phi, \mathcal L) = (\psi, \mathcal M) \Longleftrightarrow
p_X^*\mathcal L \otimes (\phi\times \id)^*\mathcal E \iso
p_Y^*\mathcal M \otimes (\psi\times \id)_*\mathcal E.
\]
Since $\mathcal E$ is defined over $K$, the graph of this relation in $\G(X)_{\bar K} \times \P(X)_{\bar K}
\times \G(X)_{\bar K} \times \P(X)_{\bar K}$ is stable under
$\aut(\bar K/K)$,  and so 
isomorphism \eqref{E:rouquierred} descends to an isomorphism
\[
\xymatrix{
\G(X) \times \P(X)\ar[r]^{F} & \G(Y) \times
\P(Y)
}
\]
of connected, reduced group schemes over $K$.

Using the projections $p_{\G(Y)}$ and $p_{\G(X)}$,  we obtain $K$-rational morphisms
\begin{align*}
\P(X) &\xra{
\rlap{\scriptsize $\alpha_Y= p_{\G(Y)}\circ F$} 
\phantom{\alpha_X= p_{\G(X)} \circ F\inv\,}
}  
\G(Y) \\
\P(Y) &\xra{\alpha_X= p_{\G(X)} \circ F\inv\,}  \G(X);
\end{align*}
let $A_X = \alpha_X(\P(Y)) \subseteq \G(X)$ and $A_Y = \alpha_Y(\P(X)) \subseteq
\G(Y)$.  (Note that, since the formation of kernels commutes with base
change, $\bar A_X \iso A_{\bar X}$.)  The pointwise argument of \cite{popaschnell}, combined with
the fact that $F$ admits an inverse, shows that $F$ induces an
isomorphism
\begin{equation*}\label{F}
A_X \times \P(X) \lra A_Y \times \P(Y)
\end{equation*}
of abelian varieties over $K$.
By Poincar\'e reducibility, it now suffices to show that $A_X$ and $A_Y$ are isogenous.

Over an algebraically closed field, Popa and Schnell choose a point $(P,Q) \in (X\times Y)(\bar
K)$ in the support of $\mathcal E$, and use it to define morphisms of
varieties over $\bar K$:
\begin{gather*}
\bar A_X \times \bar A_Y \xrightarrow{\,\bar f = \bar f_X \times \bar f_Y\,}
\bar X \times \bar Y 
\\
\hspace*{1cm}(\phi,\psi) \longmapsto(\phi(P),\psi(Q))
\end{gather*}
The dual map $\bar f^*: \P(\bar X) \times
\P(\bar Y) \to \widehat{\bar{A_X}} \times \widehat{\bar{A_Y}}$ is surjective.

Working now over a field which is only assumed to be perfect, Lemma \ref{L:albacts} supplies a canonical morphism $g_X:
\alb(\G(X)) \to \alb^1(X)$ whose
base change to $\bar K$ is $g_{X, \bar K} \iso \bar f_X$.  In
particular, by \cite[Lemma
2.2]{popaschnell}, which depends only on \cite{brion10} and is valid
in any characteristic, 
$H_X := \ker g_X$ is a finite group scheme.
Similarly, there is a canonical morphism $g_Y: \alb(\G(Y)) \to \alb(Y)$ with
finite kernel $H_Y$, and $g_{Y,\bar K} \iso \bar f_Y$.  We obtain a surjection $g^*: \P(X) \times \P(Y)
\to \widehat A_X \times \widehat A_Y$ of abelian varieties over $K$
which is an isogeny onto its image.

Consider the morphism
\[
A_X \times  A_Y \xra{\,\tau = (\tau_1, \tau_2  \pi_2)\,\,}
(A_X\times A_Y) \times (\widehat A_X \times \widehat A_Y)
\]
of abelian varieties over $K$, where
\begin{align*}
\tau_1 &= (\id_{A_X}, p_{\G(Y)} \circ F), \\
\tau_2 &= (g_X^*\circ \iota, g_Y^*\circ p_{\P(Y)} \circ F),
\end{align*}
and $\iota$ denotes the inversion map on the abelian variety
$\widehat{A_X}$. Let $p_{13}$ (respectively, $p_{24}$) denote the
projection of the codomain of $\tau$ onto the first and third
(respectively, second and fourth) components.

After base change to $\bar K$, the morphisms $\bar \tau_1$, $\bar
\tau_2$ and $\bar \tau$ coincide, respectively,  with the morphisms $\pi_1$, $\pi_2$
and $\pi$ constructed in \cite[p.533]{popaschnell}.  In
particular $\bar{p_{13}\circ \tau}$ and $\bar{p_{24}\circ\tau}$, and
therefore $p_{13}\circ \tau$ and $p_{24}\circ \tau$, are isogenies.
By Poincar\'e reducibility, $A_X$ and $A_Y$ are isogenous.
\end{proof}

\begin{cor}\label{claim}
Let $X$ and $Y$ be smooth projective varieties over a field $K$.  If
$X$ and $Y$ are derived equivalent, then $H^1(X_{\bar K},\rat_\ell)
\iso H^1(Y_{\bar K},\rat_\ell)$ and $H^{2d-1}(X_{\bar K},\rat_\ell)
\iso H^{2d-1}(Y_{\bar K},\rat_\ell)$ as representations of $\gal(\bar K/K)$, where $d = \dim X = \dim Y$ and $\ell$ is invertible in $K$.
\end{cor}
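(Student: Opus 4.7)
The plan is to combine Theorem \ref{T:PSK} with the standard comparison between $H^1_\et$ and the Albanese variety, and then to invoke Poincar\'e duality to handle the degree $2d-1$ statement.

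First, Theorem \ref{T:PSK} provides an isogeny $\pic^0(X)_\red \to \pic^0(Y)_\red$ of abelian varieties over $K$. Dualising gives a $K$-isogeny $\alb(X) \to \alb(Y)$ between the dual abelian varieties, and base-change to $\bar K$ turns this into a $\gal(\bar K/K)$-equivariant isogeny. It thereby induces a $\gal(\bar K/K)$-equivariant isomorphism of rational $\ell$-adic Tate modules $V_\ell(\alb(X)_{\bar K}) \iso V_\ell(\alb(Y)_{\bar K})$; here one uses that $\ell$ is invertible in $K$, so that the $\ell^n$-torsion subgroup schemes are \'etale and the rational Tate module is insensitive both to the reduced structure and to passage between $\pic^0$ and its dual. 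Then, for any smooth projective $Z/K$ with $\ell$ invertible in $K$, there is a canonical $\gal(\bar K/K)$-equivariant isomorphism $H^1_\et(Z_{\bar K}, \rat_\ell) \iso \Hom(V_\ell(\alb(Z)_{\bar K}), \rat_\ell)$; applying this to $X$ and $Y$ yields the desired isomorphism $H^1_\et(X_{\bar K}, \rat_\ell) \iso H^1_\et(Y_{\bar K}, \rat_\ell)$ of $\gal(\bar K/K)$-representations.

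For the degree $2d-1$ statement, I would appeal to Poincar\'e duality for $\ell$-adic \'etale cohomology, which supplies a Galois-equivariant isomorphism $H^{2d-1}_\et(Z_{\bar K}, \rat_\ell) \iso H^1_\et(Z_{\bar K}, \rat_\ell)^\vee(-d)$ for any smooth projective $Z/K$ of pure dimension $d$. Applying this to both $X$ and $Y$ (whose common dimension $d$ is given in the statement of the corollary) and then dualising and Tate-twisting the degree-$1$ isomorphism already established produces the corresponding isomorphism in degree $2d-1$.

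The substantive content is carried entirely by Theorem \ref{T:PSK}; the remaining ingredients are formal. The only place requiring a small amount of care is verifying that the standard identifications (Tate module of $\pic^0_\red$ versus that of $\pic^0$, and the comparison of $H^1_\et$ with the dual rational Tate module of the Albanese) are carried out Galois-equivariantly and commute with base change to $\bar K$, but this presents no real obstacle since $\ell$ is invertible in $K$ and the isogeny furnished by Theorem \ref{T:PSK} is defined over $K$.
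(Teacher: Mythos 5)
Your proof is correct and follows essentially the same route as the paper's: Theorem \ref{T:PSK} plus the standard Galois-equivariant identification of $H^1_{\et}$ with the rational $\ell$-adic Tate module of the Picard/Albanese variety, followed by Poincar\'e duality for the degree $2d-1$ statement. The paper phrases the degree-one step via the Kummer-sequence isomorphism $\pic^0(X)[\ell^n](\bar K)\iso H^1(X_{\bar K},\mu_{\ell^n})$ rather than by dualizing to the Albanese as you do, but the two identifications agree up to the Weil pairing and a Tate twist, so the arguments are the same in substance.
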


\begin{proof}
The claim for cohomology in degree one follows from Theorem
\ref{T:PSK} and the canonical identifications
$\pic^0(X)[\ell^n](\bar K) \iso H^1(X_{\bar K}, {
  \mu}_{\ell^n})$ provided by the Kummer sequence.
The second claim now follows from Poincar\'e duality.
\end{proof}

\putbib
\end{bibunit}

\end{document}